\newcommand*{\rom}[1]{\expandafter\@slowromancap\romannumeral #1@}
\newcommand{\sA}{\mathcal A}
\newcommand{\sC}{\mathcal C}
\newcommand{\sD}{\mathcal D}
\newcommand{\sE}{\mathcal E}
\newcommand{\sF}{\mathcal F}
\newcommand{\sL}{\mathcal L}
\newcommand{\sP}{\mathcal P}
\newcommand{\sS}{\mathcal S}
\newcommand{\sT}{\mathcal T}
\newcommand{\R}{\mathbb R}
\newcommand{\E}{\mathbb E}
\newcommand{\F}{\mathbb F}
\newcommand{\Prob}{\mathbb P}
\newtheorem{thm}{Theorem}
\newtheorem{mres}{Main Result}
\newtheorem{prop}{Proposition}
\newtheorem{lem}{Lemma}
\newtheorem{cor}{Corollary}
\newtheorem{ass}{Assumption}
\begin{document}

\title{Optimal Stopping and the Sufficiency of Randomized Threshold Strategies\thanks{
University of Warwick, Coventry, CV4 7AL. UK. Email: vicky.henderson@warwick.ac.uk, d.hobson@warwick.ac.uk, m.zeng@warwick.ac.uk. We would like to thank participants at the 10th Oxford-Princeton workshop (May 25-26, 2017) for helpful comments. Matthew Zeng is supported by a {Chancellor's International Scholarship} at the University of Warwick.
}}

\author{Vicky Henderson\qquad %\thanks{University of Warwick, Coventry, CV4 7AL. UK. Email: vicky.henderson@warwick.ac.uk} \\
%\and
David Hobson\qquad%\thanks{University of Warwick, Coventry, CV4 7AL. UK. Email: d.hobson@warwick.ac.uk} \\
%\and
Matthew Zeng%\thanks{University of Warwick, Coventry, CV4 7AL. UK. Email: m.zeng@warwick.ac.uk}
 \\
 %University of Warwick
%%This version assumes existence of threshold paper. shortcautious.tex
}

\date{\today}
\maketitle

\begin{abstract}
In a classical optimal stopping problem the aim is to maximize the expected value of a functional of a diffusion evaluated at a stopping time.
This note considers optimal stopping problems beyond this paradigm. We study problems in which the value associated to a stopping rule depends on the law of the stopped process. If this value is quasi-convex on the space of attainable laws then it is well known result that it is sufficient to restrict attention to the class of threshold strategies. However, if the objective function is not quasi-convex, this may not be the case. We show that, nonetheless, it is sufficient to restrict attention to mixtures of threshold strategies.

%{\bf This still requires the references to be included, but otherwise I am fairly happy.}
\end{abstract}

\begin{center}
\line(1,0){440}
\end{center}

\section{Introduction and main results}

Let $Y=(Y_t)_{t \geq 0}$ be a time-homogeneous, continuous strong-Markov process. Let $\sT$ be the set of all stopping times, and let $\sT_T$ be the set of all (one- and two-sided) threshold stopping times, ie. stopping rules based on the first crossing of upper or lower thresholds. %, ie
%\[ \sT_T = \{  \tau_{a,b}; \ \leq y \leq b \} \]
%where $\tau_{a,b} = \inf \{ u \geq 0 : Y_u \notin (a,b) \}$.
Let $V=V(\tau)$ be the value associated with a stopping rule $\tau$. %(so, for example, we might have $V(\tau) = \E[u(Y_\tau)]$).
Consider the optimal stopping problem associated with $V$, ie. the problem of finding
\begin{equation}
\label{eq:osp}
 V_*( \sS) = \sup_{\tau \in \sS} V(\tau)
\end{equation}
where $\sS$ is some set of stopping times (for example $\sS = \sT$ or $\sS= \sT_T$), and especially the problem of finding an optimizer for \eqref{eq:osp}.
We say the $V=V(\tau)$ is law invariant if, whenever $\sigma,\tau$ are stopping times, $\sL(Y_\sigma)= \sL(Y_\tau)$ implies that $V(\sigma)=V(\tau)$, where $\sL(Z)$ is the law of $Z$. It follows that $V(\tau)=H(\sL(Y_\tau))$ for some map $H$.

The following result is well-known, but we include it as a contrast to our result on the sufficiency of randomized threshold rules.
%: Q(\sT) \mapsto \R$, where $Q(\sS)= \{ \nu : \nu = \sL(Y_\tau), \tau \in \sS \}$.
\begin{mres}[See Theorem~\ref{thm:main2} below]
Suppose $H$ is quasi-convex and lower semi-continuous. Then $V_*(\sT_T) = V_*(\sT)$.
%In theorem: stopping criterion is law invariant, Y is regular.
\end{mres}

\begin{cor}
In the setting of Theorem~\ref{thm:main2}, in solving the optimal stopping problem \eqref{eq:osp} over the set of all stopping times it is sufficient to restrict attention to threshold rules.
\label{cor:A}
\end{cor}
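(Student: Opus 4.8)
The plan is to read off the corollary directly from the value identity supplied by Theorem~\ref{thm:main2}, with no further structural argument required; the content of the corollary is interpretive rather than computational. Since every threshold stopping time is in particular a stopping time we have $\sT_T \subseteq \sT$, whence the trivial inequality $V_*(\sT_T) \le V_*(\sT)$ holds by monotonicity of the supremum over a larger index set. Theorem~\ref{thm:main2} furnishes exactly the reverse inequality, so that $V_*(\sT_T) = V_*(\sT)$. This equality of optimal values is, by definition, the assertion that one loses nothing by confining the search to threshold rules: the best value attainable over all of $\sT$ already coincides with the best value attainable over $\sT_T$.

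To make the phrase ``sufficient to restrict attention'' explicit at the level of (near-)optimizers, I would simply unwind the suprema. For any $\varepsilon > 0$, the definition of $V_*(\sT)$ as a supremum together with the value identity yields a threshold rule $\tau_\varepsilon \in \sT_T$ with $V(\tau_\varepsilon) > V_*(\sT) - \varepsilon$; thus threshold rules are $\varepsilon$-optimal for every $\varepsilon$. Moreover, if the outer problem \eqref{eq:osp} over $\sT$ admits an optimizer $\tau^*$, then $V(\tau^*) = V_*(\sT) = V_*(\sT_T)$, so the value of this optimizer is matched within $\sT_T$ and there is no gain from leaving the threshold class.

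The one point that warrants care, and the only place an obstacle could arise, is whether ``sufficient'' is intended merely as equality of optimal values or as the stronger claim that an optimizer may itself be taken to be a threshold rule. The former reading is immediate from Theorem~\ref{thm:main2} as above and the deduction is purely formal. For the latter one would additionally need the threshold supremum to be attained; here I would invoke the lower semi-continuity of $H$ together with a compactness property of the attainable laws $\{\sL(Y_\tau) : \tau \in \sT_T\}$, extracting a convergent subsequence of near-optimizers and using lower semi-continuity to conclude that the limiting threshold rule is optimal. Under the hypotheses of Theorem~\ref{thm:main2} this attainment step is the sole nontrivial ingredient beyond the value identity, and absent an explicit compactness assumption the corollary is most naturally read in its value-equality sense, for which it follows at once.
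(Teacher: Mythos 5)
Your proposal is correct and matches the paper's (implicit) treatment: the corollary is read off directly from the value identity $V_*(\sT_T)=V_*(\sT)$ of Theorem~\ref{thm:main2}, with no separate argument given. Your closing caveat is also well judged --- the paper intends ``sufficient'' in the value-equality ($\varepsilon$-optimality) sense and makes no attainment claim for the threshold class.
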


As the canonical example, consider expected utility, whence $V(\tau) = \E[u( Y_\tau) ]$, for a continuous, increasing function $u$. Then $V$ is law invariant. Indeed $V(\tau)= H(\sL(Y_\tau))$ where $H(\zeta) = \int u(z) \zeta(dz)$. $H$ is quasi-convex and lower semi-continuous. In this example it is well known that
there is an optimal stopping rule which is of threshold form, see for example, Dayanik and Karatzas~\cite{DayanikKaratzas:03}. The fact that quasi-convexity means that there is no benefit from following randomized strategies
is well understood in the economics literature, see Machina~\cite{Machina:85} Camerer and Ho~\cite{CamererHo:94}, Wakker~\cite{Wakker:10} and He et al~\cite{HeHuOblojZhou:17}.

Recently there has been a surge of interest in problems which, whilst they have the law invariance property, do not satisfy the quasi-convex criterion. Two examples are optimal stopping under prospect theory (Xu and Zhou~\cite{XuZhou:13}), and optimal stopping under cautious stochastic choice (Henderson et al~\cite{HendersonHobsonZeng:17}).

Introduce the set $\sT_R$ of mixed or randomized threshold rules.

\begin{mres}[See Theorem~\ref{thm:main1} below] Suppose law invariance holds for $V$, but not quasi-convexity for $H$. Then
$V_*(\sT_T) \leq V_*(\sT_R) = V_*(\sT)$.
\end{mres}

We will show by example that the first inequality may be strict.

\begin{cor}
In the setting of Theorem~\ref{thm:main1}, in solving the optimal stopping problem \eqref{eq:osp} over the set of all stopping rules it is sufficient to restrict attention to randomized threshold rules, but it may not be sufficient to restrict attention to (pure) threshold rules.
\label{cor:B}
\end{cor}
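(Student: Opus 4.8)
The plan is to exploit law invariance to recast everything in terms of the set of attainable laws, and then to identify (pure) threshold laws with the extreme points of that set. Write $\sD = \{\sL(Y_\tau) : \tau \in \sT\}$ and $\sD_T = \{\sL(Y_\tau) : \tau \in \sT_T\}$ for the laws attainable by arbitrary, respectively threshold, stopping rules, and recall $V(\tau) = H(\sL(Y_\tau))$. Since a pure threshold rule is a degenerate (Dirac) mixture, $\sT_T \subseteq \sT_R$ and hence $V_*(\sT_T) \le V_*(\sT_R)$ is immediate; the content is the equality $V_*(\sT_R) = V_*(\sT)$, which restates the second Main Result and expresses the claimed sufficiency of randomized thresholds. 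The key structural claim I would isolate is that every $\mu \in \sD$ is a \emph{mixture of threshold laws}, so that $\sD$ coincides with the set of laws realizable by rules in $\sT_R$.

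To prove this claim I would first pass to natural scale, replacing $Y$ by $X = s(Y)$ so that $X$ is a continuous local martingale; under the standing integrability assumptions, optional stopping shows every attainable law is mean-preserving, so $\sD \subseteq \sP_{X_0}$, the set of laws with mean $X_0$. Next I would invoke the classical characterization that the extreme points of $\sP_{X_0}$ are exactly the laws supported on at most two points with the prescribed mean, and observe that these are precisely the laws $\nu_{a,b}$ produced by the two-sided threshold rule exiting $(a,b)$, together with the Dirac law of the trivial rule. A Choquet-type representation then expresses any $\mu \in \sP_{X_0}$ as a barycenter $\mu = \int \nu_{a,b}\, d\rho(a,b)$ of these extreme laws. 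Finally I would realize this barycenter by the randomized rule that samples $(a,b)$ according to $\rho$, independently of $Y$, and stops at the corresponding exit time; its stopped law is exactly $\mu$. By law invariance this rule has value $H(\mu) = V(\tau)$, so every value attainable in $\sT$ is attainable in $\sT_R$, giving $V_*(\sT) \le V_*(\sT_R)$ and hence equality.

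For the second, negative, assertion of the corollary I would note that $V_*(\sT_T) = \sup_{\mu \in \sD_T} H(\mu)$ is a supremum over the extreme points alone, whereas $V_*(\sT) = \sup_{\mu \in \sD} H(\mu)$ is a supremum over their convex hull. When $H$ is quasi-convex the two agree (the first Main Result), but for non-quasi-convex $H$ the supremum over the hull can be strictly larger, attained at a genuinely non-degenerate mixture. To exhibit strictness I would construct, or borrow from the prospect-theory and cautious-stochastic-choice settings cited above, an explicit law-invariant $V$ whose $H$ has an interior ``peak'': an attainable law $\mu^\ast$ supported on three or more points with $H(\mu^\ast) > H(\nu)$ for every two-point (threshold) law $\nu$. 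This yields $V_*(\sT_T) < V_*(\sT_R) = V_*(\sT)$, completing the corollary.

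The main obstacles are concentrated in the structural claim. The delicate points are (i) the attainability side: ensuring, under the standing assumptions on $Y$, that the two-sided exit times are a.s.\ finite and that the members of $\sD$ are genuinely mean-preserving; (ii) the measurable selection needed so that the continuum mixture $\rho$ over barrier pairs defines a bona fide element of $\sT_R$; and (iii) closure issues, namely whether one must work with the closed convex hull and whether the relevant suprema are attained, which is where any lower semi-continuity hypothesis on $H$ would enter. The strictness example is comparatively routine once a concrete non-quasi-convex $H$ is in hand.
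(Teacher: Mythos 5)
Your overall architecture --- pass to natural scale, show that every attainable law is a mixture of threshold laws, and realize that mixture by drawing the barrier pair at time zero independently of the process --- is the same as the paper's, and your treatment of the two easy pieces (the inclusion $\sT_T \subseteq \sT_R$, and strictness via a non-quasi-convex example from prospect theory or cautious stochastic choice, which is exactly the paper's route) is fine. But the structural claim at the heart of your argument contains a genuine error: it is \emph{not} true that ``optional stopping shows every attainable law is mean-preserving.'' That conclusion requires uniform integrability of the stopped local martingale, which fails precisely in the cases the paper treats in the main text. When $I^X = [0,\infty)$, say, $X$ is only a non-negative supermartingale; the paper's Fatou argument gives only $\E[X_\tau] \leq x$, and the inequality can be strict: the rule ``never stop'' yields $X_\tau = \lim_{t} X_t = 0$, and $\tau^X_{a,\infty}$ yields $\delta_a$ with $a<x$. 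Consequently $Q^X(\sT) = \sP_{\leq x}$, which strictly contains the mean-$x$ laws. Any attainable law with mean strictly below $x$ (e.g.\ an exponential law with mean $x/2$) cannot be written as a mixture of two-point laws with barycentre $x$ --- every such mixture has mean exactly $x$ --- so your Choquet representation over the extreme points of $\sP_{X_0}$ simply does not reach it, and your proof of $V_*(\sT) \leq V_*(\sT_R)$ breaks down for exactly those laws. The situation is worse still when $I^X = \R$: there $\lim_t X_t$ does not exist, every law in $\sP(\R)$ (integrable or not) is attainable by hitting a randomized level, and no mean constraint holds at all. Your claim is correct only in the case of bounded $I^X$, where $X$ is a uniformly integrable martingale and $Q^X(\sT) = \sP_{=x}$.

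The repair is what the paper's Lemma~\ref{lem:Q=} actually does: the relevant building blocks are not only the two-point mean-$x$ laws $\chi_{a,b}$ but also the Dirac masses $\delta_a$, $a \leq x$, which are the laws of the \emph{one-sided} threshold rules $\tau^X_{a,\infty}$. The paper splits a target $\nu \in \sP_{\leq x}$ into a piece $\nu_0$ (the mass below a critical level $z^*$), which is embedded by one-sided rules and absorbs the entire mean deficit, and a piece $\nu_1$ with barycentre exactly $x$, which is embedded by the explicit Hall-type mixture $\eta(da,db) \propto \nu_1(da)\,\nu_1(db)\,(b-a)$. This explicit construction also disposes of your concerns (ii) and (iii): no abstract Choquet theorem or measurable selection argument is needed, which matters because $\sP_{\leq x}$ is not weakly compact, so the general Choquet machinery you invoke would itself require separate justification there.
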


It should be noted that we do not include discounting in our analysis since a problem involving discounting does not satisfy the law invariance property. Nonetheless, as is well known, the conclusion of Corollary~\ref{cor:A} remains true for the problem of maximizing discounted expected utility of the stopped process $V(\tau) = \E[ e^{- \beta \tau} u(Y_\tau)]$. However, in problems which go beyond the expected utility paradigm, there are often modelling issues which mitigate against the inclusion of discounting. For this reason, historically the literature has concentrated on problems with no discounting. Finding the optimal stopping rule is often already challenging in these models.

The significance of Corollary~\ref{cor:B} is as follows. In many classical models optimal stopping behavior involves stopping on first exit from an interval.
If decision makers are observed to stop at levels which have already been visited by the process, then this behavior is inconsistent with the classical optimal stopping model. However, our result implies that the converse is not true: if decision makers are observed to stop only when the process is reaching new maxima or minima, then it does not necessarily mean that they are maximizers of expected payoffs. Instead the decision criteria may be more complicated, and they may be utilizing a randomized threshold rule.

\section{Problem specification and the problem in natural scale}

We work on a filtered probability space $(\Omega, \sF, \F = \{ \sF_t \}_{t \geq 0} , \Prob)$. Let $Y= (Y_t)_{t \geq 0}$ be a $(\F, \Prob)$-stochastic process on this probability space with state space $I$ which is an interval. Let $\bar{I}$ be the closure of $I$. We suppose that $Y$ is a regular, time-homogeneous diffusion with initial value $Y_0=y$ such that $y$ lies in the interior of $I$.

Let $\sT$ be the class of all stopping times $\tau$ such that $\lim_{t \uparrow \infty} Y_{t \wedge \tau}$ exists (almost surely).
We introduce two subclasses of stopping times
\begin{itemize}
%\item $\sT$, the class of all stopping times;
\item $\sT_T$, the subclass of (pure) threshold stopping times;
\item $\sT_R$, the subclass of randomised threshold stopping times.
\end{itemize}
Note that $\sT_T \subset \sT_R \subset \sT$.
The set of pure threshold stopping times includes stopping immediately and can be written as
\begin{equation}
 \sT_T = \sT \cap \left(\cup_{\beta \leq y \leq \gamma; \; \beta, \gamma \in \bar{I}^Y} \{ \tau_{\beta,\gamma} \} \right), \label{eq:TTdef}
\end{equation}
where $\tau_{a,b} = \inf_{u \geq 0} \{ u: Y_u \notin (a,b) \}$. Note that if $a = y$ or $b=y$ then $\tau_{a,b}=0$ almost surely, and that if $\sigma=\tau$ almost surely then we have $V(\sigma)=V(\tau)$. Hence we may suppose that $\tau \equiv 0$, the strategy of stopping immediately, lies in $\sT_T$.

In order to be able to define a sufficiently rich class of randomized stopping times we need to assume that $\F$ is larger than the filtration generated by $Y$.
\begin{ass}
$\sF_0$ is sufficiently rich as to include a continuous random variable, and the stochastic process $Y$ is independent of this random variable.
\label{ass:filtration}
\end{ass}

It follows from the assumption that for any probability measure $\zeta$ on $\sD = ([-\infty,y] \cap \bar{I}) \times ([y,\infty]\cap \bar{I})$ there exists an  $\sF_0$-measurable random variable $\Theta = \Theta_\zeta = (A_\zeta, B_\zeta)$ such that $(A_\zeta, B_\zeta)$ has law $\zeta$. For a set $\Gamma$ let $\sP(\Gamma)$ be the set of probability measures on $\Gamma$. Then for any $\zeta \in \sP(\sD)$ we can define the randomised stopping time $\tau_\zeta$ as the first time $Y$ leaves a random interval, where the interval is chosen at time 0 with law $\zeta$. Then $\tau_\zeta = \tau_{A_\zeta, B_\zeta} = \inf \{ u : Y_u \notin (A_\zeta,B_\zeta)\}$. The set of randomized threshold rules $\sT_R$ is given by
\begin{equation}
\sT_R = \sT \cap \left( \{ \tau_\zeta : \zeta \in \sP(\sD) \} \right).
\label{eq:TRdef}
\end{equation}

Our analysis is focussed on problems in which the value associated with a stopping rule depends only on the law of the stopped process. Let $Q(\sS)= \{ \mu : \mu = \sL(Y_\tau) , \tau \in \sS \}$.

\begin{ass}[Law invariance]
\label{ass:lip}
$V$ is law invariant, ie
$V(\tau) = H(\sL(Y_\tau))$ for some function $H : Q(\sT) \mapsto \R$.
\end{ass}

Given that the value associated with a stopping rule is law invariant, one natural approach to finding the optimal stopping time is to try to characterize $Q(\sS)$.
Often, the best way to do this is via a change of scale. Let $s$ be a strictly increasing function such that $X = s(Y)$ is a local martingale. (Such a function $s$ exists under very mild conditions on $Y$ see, for example Rogers and Williams~\cite{RogersWilliams:00}, and is called a scale function. For example, if $Y$ solves the SDE $dY_t = \sigma(Y_t) dB_t + \xi(Y_t) dt$ for smooth functions $\sigma$ and $\xi$ with $\sigma > 0$ then $s=s(z)$ is a solution to $\frac{1}{2} \sigma(z)^2 s'' + \xi(z) s' = 0$. Note that if $s$ is a scale function then so is any affine transformation of $s$ and so we may choose any convenient normalization for $s$.) Let $I^X = s(I)$ and let $\bar{I}^X$ be the closure of $I^X$. Then $X$ is a regular, time-homogenous local-martingale diffusion on $I^X$ with initial value $x=s(y)$.

Set $Q^X(\sS) = \{ \nu : \nu = \sL(X_\tau) , \tau \in \sS \}$. Then if $\sL(X_\tau) = \nu$ we have $\sL(Y_\tau) = \nu \sharp s$ where $(\nu \sharp s)(D) = \nu(s(D))$. It follows that $\nu \in Q^X(\sS)$ if and only if $\nu \sharp s \in Q(\sS)$ and hence
\begin{equation}
\label{eq:QQX}
Q(\sS) = \{ \nu \sharp s ; \nu \in Q^X(\sS) \}.
\end{equation}
Thus, if we can characterize $Q^X(\sS)$ then we can also characterize $Q(\sS)$. Moreover, defining $H^X : Q^X(\sT) \mapsto \R$ by $H^X(\nu) = H(\nu \sharp s)$ we have
$V_*(\sS) = \sup_{\mu \in Q(\sS)} H(\mu) = \sup_{\nu \in Q^X(\sS)} H^X(\nu)$. The problem of optimizing over stopping laws for the problem with $Y$ becomes
a problem of optimizing over the possible laws of the stopped process $X$ in natural scale.

Note that $\tau_{a,b} = \inf_{u \geq 0} \{ u : Y_u \notin (a,b) \} = \inf_{u \geq 0} \{ u : X_u \notin (s(a),s(b)) \} =: \tau^X_{s(a),s(b)}$. Hence $\sT_T$ has the alternative representation
\[ \sT_T = \sT \cap \left(  \cup_{\beta \leq x \leq \gamma; \; \beta, \gamma \in \bar{I}^X} \{ \tau^X_{\beta,\gamma} \} \right) , \]
%(where not both $\beta$ and $\gamma$ are infinite)
and the set of threshold stopping times for $Y$ is the set of threshold stopping times for $X$. Similarly,
$\sT_R$ can be rewritten as $\sT_R = \sT \cap (\{ \tau^X_\eta  : \eta \in \sP(\sD^X) \})$ where $\sD^X = ([-\infty,x] \cap \bar{I}^X) \times ([x, \infty) \cap \bar{I}^X))$ and
\[ \tau^X_\eta = \inf_{u \geq 0} \{u : X_u \notin (A_\eta, B_\eta) \mbox{where $(A_\eta, B_\eta)$ has law $\eta$} \} . \]

\section{Characterizing the possible laws of the stopped process in natural scale}
If $X=s(Y)$ is in natural scale then the state space of $X$ is an interval $I^X = s(I)$ and $X_0 = x := s(y)$. There are four cases:
\begin{enumerate}
\item $I^X$ is bounded;
\item $I^X$ is unbounded above but bounded below;
\item $I^X$ is bounded above but unbounded below;
\item $I^X$ is unbounded above and below.
\end{enumerate}
The third case can be reduced to the second by reflection. The first case is generally similar to the second case, and typically the proofs are similar but simpler. The final case is degenerate and will be treated separately. In the main text we will mainly present arguments for the second case (with the other cases covered in an appendix), but results will be stated in a form which applies in all cases.

Henceforth, in the main text we suppose $I^X$ is bounded below, but unbounded above. Without loss of generality we may assume $I^X=(0,\infty)$ or $[0,\infty)$. Then $X$ is a non-negative local martingale and hence a super-martingale. Moreover, $\lim_{t \rightarrow \infty} X_t$ exists. Hence $\sT$ includes stopping rules which take infinite values and on $\{\tau = \infty \}$ we set $X_\tau = \lim_{t \rightarrow \infty} X_t=0$. In this case $\sT$ is the set of all stopping times and the intersection with $\sT$ in the definitions \eqref{eq:TTdef} and \eqref{eq:TRdef} is not necessary.
By Fatou's lemma and the super-martingale property
\[ \E[X_\tau] = \E[ \lim_{t \rightarrow \infty} X_{t \wedge \tau}] \leq \liminf_{t \rightarrow \infty} \E[X_{t \wedge \tau}] \leq x .\]
In particular, if we set $\sP_{\leq x} =  \{ \nu \in \sP([0,\infty)) : \int z \nu(dz) \leq x \}$ then $Q^X(\sT) \subseteq \sP_{\leq x}$.

%If $I^X$ is unbounded above and below let $\sP_x = \sP (\R)$; if $I^X$ is unbounded above but bounded below set $\sP_x = \{ \nu \in \sP([0,\infty)) : \int z \nu(dz) \leq x \}$; if $I^X$ is bounded above but unbounded below set $\sP_x = \{ \nu \in \sP([0,\infty)) : \int z \nu(dz) \geq x \}$; if $I^X$ is bounded, set $\sP_x = \{ \nu \in \sP([0,\infty)) : \int z \nu(dz) = x \}$.

\begin{lem}
$Q^X(\sT) = Q^X(\sT_R)$.
\label{lem:Q=}
\end{lem}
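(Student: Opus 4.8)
The inclusion $Q^X(\sT_R)\subseteq Q^X(\sT)$ is immediate from $\sT_R\subseteq\sT$, so the whole content is the reverse inclusion. Since we have already shown $Q^X(\sT)\subseteq\sP_{\leq x}$, my plan is to prove the single statement $\sP_{\leq x}\subseteq Q^X(\sT_R)$; this sandwiches $Q^X(\sT)$ between $\sP_{\leq x}$ and $Q^X(\sT_R)$ and forces all three sets to coincide. The first step is to compute exactly which laws a randomized threshold produces. For a pure threshold $\tau^X_{a,b}$ with $0\le a\le x\le b$, continuity of $X$ together with the martingale structure gives the exit law
\[ \kappa_{a,b} := \sL(X_{\tau^X_{a,b}}) = \frac{b-x}{b-a}\,\delta_a + \frac{x-a}{b-a}\,\delta_b \qquad (b<\infty), \]
the coefficients being pinned down because $X$ stopped at the finite exit time $\tau^X_{a,b}$ is a bounded (hence true) martingale, so $\E[X_{\tau^X_{a,b}}]=x$ fixes the two hitting probabilities; while for $b=\infty$ the fact that $X\to 0$ (equivalently, that $X$ hits every level below $x$ almost surely) gives $\kappa_{a,\infty}=\delta_a$. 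Consequently $\tau^X_\eta$ produces exactly the mixture $\int \kappa_{a,b}\,\eta(da,db)$, so $Q^X(\sT_R)$ is precisely the set of barycenters of the family $\{\kappa_{a,b}:(a,b)\in\sD^X\}$.

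This reduces the lemma to a convex-geometry statement: every $\nu\in\sP_{\leq x}$ must be a mixture of the laws $\kappa_{a,b}$. Each $\kappa_{a,b}$ already lies in $\sP_{\leq x}$ (the two-point laws have mean exactly $x$, the Diracs $\delta_a$ have mean $a\le x$) and $\sP_{\leq x}$ is convex, so the mixtures are contained in $\sP_{\leq x}$; the work is the opposite containment. The clean route is Choquet's theorem: $\sP_{\leq x}$ is convex and weakly compact (tightness follows from the uniform mean bound via Markov's inequality, and closedness under weak limits from Fatou applied to $\nu\mapsto\int z\,\nu(dz)$), so each $\nu\in\sP_{\leq x}$ is the barycenter of a probability measure supported on the extreme points of $\sP_{\leq x}$. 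I would then show the extreme points are exactly the $\kappa_{a,b}$: any candidate $\nu$ that is neither a single Dirac $\delta_a$ with $a\le x$ nor a balanced mean-$x$ two-point law can be written as a nontrivial convex combination inside $\sP_{\leq x}$ (splitting off a two-point piece $\kappa_{a,b}$ built from the smallest and largest support points when the mean equals $x$, and splitting off a Dirac piece when the mean is strictly below $x$), whereas the $\kappa_{a,b}$ themselves admit no such splitting.

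The final step, and the main obstacle, is to convert the abstract Choquet representing measure on $\mathrm{ext}(\sP_{\leq x})$ into a genuine $\eta\in\sP(\sD^X)$ on endpoint pairs, i.e. to turn the identification $\delta_a\leftrightarrow(a,\infty)$ and (balanced two-point on $\{a,b\}$)$\leftrightarrow(a,b)$ into a measurable map and push the representing measure forward along it. The delicate points are the behaviour at the boundary $b=\infty$ (which carries the mean deficit $x-\int z\,\nu(dz)$, since $\kappa_{a,b}\to\delta_a$ weakly as $b\to\infty$) and the fact that the parametrization degenerates to $\delta_x$ on $\{a=x\}\cup\{b=x\}$, so that one needs a measurable selection rather than a genuine inverse. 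An alternative, more hands-on route that sidesteps the abstract machinery is to establish the decomposition first for finitely supported $\nu$ by the greedy splitting above, inducting on the number of atoms, and then pass to general $\nu$ by weak approximation together with a tightness argument for the mixing measures $\eta_n$ on the compactified parameter set $\sD^X$; this makes the boundary bookkeeping explicit, but again the escape of mass to $b=\infty$ is the point requiring care.
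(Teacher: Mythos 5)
Your proposal takes a genuinely different route from the paper. The paper never touches extreme-point theory: it proves $\sP_{\leq x}\subseteq Q^X(\sT_R)$ by writing down an explicit mixing measure, an extension of Hall's solution of the Skorokhod embedding problem. Concretely, it splits $\nu$ at a quantile level $v^*$ into a lower part $\nu_0$, embedded by one-sided rules (the Dirac components $\kappa_{a,\infty}=\delta_a$ in your notation), and an upper part $\nu_1$ with barycentre exactly $x$, embedded by the mixture $\eta(da,db)=\nu_1(da)\,\nu_1(db)\,(b-a)/c$ on $\{z^*\leq a\leq x<b<\infty\}$; that this works is then verified by integrating bounded test functions. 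Your abstract route, if completed, buys generality (it uses only convexity and weak compactness of $\sP_{\leq x}$), whereas the paper's constructive route buys explicitness, which is used downstream: Lemma~\ref{lem:atoms} (purely atomic $\nu$ yields purely atomic $\eta$) is read off directly from the explicit formula for $\eta$ and would not follow from an abstract Choquet representation.

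There is, however, a genuine gap in the step your route leans on most: the identification of the extreme points of $\sP_{\leq x}$. Your argument that a mean-$x$ law with more than two support points is not extreme is to split off $\epsilon\,\kappa_{a,b}$ with $a,b$ the smallest and largest support points. This requires $\nu$ to have atoms at $a$ and $b$: since $\kappa_{a,b}$ charges exactly the two points $a$ and $b$, the remainder $\nu-\epsilon\,\kappa_{a,b}$ fails to be a nonnegative measure whenever $\nu(\{a\})=0$ or $\nu(\{b\})=0$ --- for instance whenever $\nu$ is atomless, such as the uniform law on $[x-1,x+1]$. The claim you need is true, but for diffuse $\nu$ it requires a different argument, e.g.\ the standard perturbation: choose three disjoint sets $U_1,U_2,U_3$ of positive $\nu$-mass, solve the two linear equations $\sum_i\alpha_i\nu(U_i)=0$ and $\sum_i\alpha_i\int_{U_i}z\,\nu(dz)=0$ for some $(\alpha_i)\neq 0$, and observe that $\nu\pm\epsilon\sum_i\alpha_i\,\nu|_{U_i}$ are distinct elements of $\sP_{\leq x}$ averaging to $\nu$ for small $\epsilon$. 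Note that this issue does not arise in your ``hands-on'' alternative, since finitely supported measures do have atoms at their extreme support points; that second route (finite support first, then weak approximation using compactness of $\sD^X$ and continuity of $(a,b)\mapsto\kappa_{a,b}$) is essentially the machinery of Lemmas~\ref{lem:atoms} and~\ref{lem:approx} of the paper and is the easier one to complete. The measurable-selection step you flag in the Choquet route is routine: map an extreme point to the pair consisting of the infimum and supremum of its support (with the convention $b=\infty$ for Diracs below $x$) and push the representing measure forward.
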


\begin{proof}
Here we prove the lemma in the case where $I^X$ is bounded below. We show that $Q^X(\sT) = Q^X(\sT_R)=\sP_{\leq x}$.
Given $\nu \in \sP_{\leq x}$ the aim is to find a stopping time $\tau \in \sT_R$ such that $\sL(X_\tau)= \nu$. The task of finding general stopping times with $\sL(X_\tau) = \xi$ for given $\xi \in \sP(\overline{I}^X)$ is known as the Skorokhod embedding problem (Skorokhod~\cite{Skorokhod:65}). In fact we use an extension of an embedding due to Hall~\cite{Hall:85}, see also Durrett~\cite{Durrett:91}. The extension relates to the fact that we allow for target laws which have a different mean to the initial value of $X$, whereas the Hall embedding assumes $\int z \nu(dz) = x$. The Hall embedding, and the extension we give, are mixtures of threshold strategies.

Suppose $\nu$ is an element of $\sP_{\leq x}$ (and $\nu$ is not a point mass at $x$). The case of $\nu = \delta_x$ corresponds to the (threshold) stopping time $\tau=0$. Let $G$ be the (right-continuous) quantile function of $\nu$. We have
$x \geq \int z \nu(dz) = \int_{(0,1)}G(u) du$. In particular, unless $\lim_{u \uparrow 1}G(u) \leq x$ there exists a unique solution $v^* \in [0,1)$ to $\int_v^1 [G(w) - x] dw = 0$. Let $z^* = G(v^*)\leq x$. If $\lim_{u \uparrow 1}G(u) \leq x$ then set $v^*=1$ and $z^* = \lim_{u \uparrow 1}G(u)$.

Let $\nu_0$ be the measure of size $v^*$ such that $\nu_0([0,z)) = v^* \wedge \nu([0,z))$. Then $\nu_0$ has support contained in $[0,z^*]$.
Let $\nu_1$ be the measure of size $1-v^*$ such that $\nu_1([0,z)) =  (\nu([0,z))-v^*)^+$. Then $\nu_1$ has support in $[z^*,\infty)$ and barycentre $x$. Moreover $\nu = \nu_0 + \nu_1$.

Define $c = \int_x^\infty (y-x) \nu(dy)$. By construction, $c = \int_x^\infty (y-x) \nu_1(dy)$ and we have
from the fact that $\nu_1$ has barycentre $x$ that $\int_{z^*}^\infty (y-x) \nu_1(dy)=0$ and hence
\begin{equation}
c = \int_{z^*}^x (x-y) \nu_1(dy).
\label{eq:c}
\end{equation}

Let $\eta \in \sP([0,x] \times(x,\infty])$ be given by
%\begin{eqnarray*}
\[ \eta(da,db)  = \nu_0(da) I_{ \{ 0 \leq a \leq z^* \} } I_{ \{ b = \infty \} }
                  %+ (v^* - \nu([0,z^*))\delta_{z^*}(da) I_{ \{ b = \infty \} }
                  + \nu_1(da) \nu_1(db) \frac{(b-a)}{c} I_{ \{ z^* \leq a \leq x < b < \infty  \} } . \]
% & = & \nu(da) I_{ \{ 0 \leq a < z^* \} } I_{ \{ b = \infty \} }
%                  +  (v^* - \nu([0,z^*))\delta_{z^*}(da) I_{ \{ b = \infty \} }
%                  + \nu(da) \nu(db) \frac{(b-a)}{c} I_{ \{ z^* \leq a \leq x < b <\infty \} }
%\end{eqnarray*}
Note first that $\eta$ is a probability measure:
\begin{eqnarray*}
\lefteqn{ \int_{0 \leq a \leq x} \int_{x < b \leq \infty} \eta(da,db) } \\
 & = & v^* %\nu([0,z^*)) + (v^* - \nu([0,z^*))
      + \int_{z^* \leq a \leq x} \nu_1(da) \int_{x < b < \infty} \frac{b-x}{c} \nu_1(db)
      + \int_{z^* \leq a \leq x} \frac{x-a}{c} \nu_1(da) \int_{x < b < \infty}  \nu_1(db) \\
     &  = & v^* +  \int_{z^* \leq a \leq x} \nu_1(da) +  \int_{x < b < \infty}  \nu_1(db) = v^* + \nu_1([z^*,\infty)) = 1
\end{eqnarray*}
where we use the definition of $c$ and \eqref{eq:c} in going from the second line to the third.

It remains to show that $\sL(X_{\tau^X_\eta}) = \nu$.
Let $f$ be a bounded test function. Then, using the fact that if $b=\infty$ then $X_{\tau^X_{a,\infty}}=a$, and the definition of $c$ and \eqref{eq:c} for the penultimate line,
\begin{eqnarray*} \E[f( X_{\tau^X_\eta})] & = & \int \int \eta(da,db) \E[f(X_{\tau^X_{a,b}})] \\
& = & \int \nu_0 (da) f(a) + \int_{z^* \leq a \leq x}  \int_{x < b < \infty} \nu_1(da) \nu_1(db) \frac{b-a}{c} \left[ f(a)\frac{(b-x)}{b-a} + f(b) \frac{(x-a)}{b-a} \right] \\
& = & \int \nu_0 (da) f(a) + \int_{z^* \leq a \leq x} \nu_1(da) f(a) \int_{x < b<\infty} \nu_1(db) \frac{(b-x)}{c} \\
    && \hspace{30mm} + \int_{z^* \leq a \leq x} \frac{(x-a)}{c} \nu_1(da) \int_{x < b<\infty} f(b) \nu_1(db) \\
& = & \int_{0 \leq z \leq z^*} f(z) \nu_0(dz) + \int_{z^* \leq z \leq x} f(z) \nu_1(dz)+ \int_{x < z} f(z) \nu_1(dz) \\
& = & \int f(z) \nu(dz).
\end{eqnarray*}
Hence $\sL(X_{\tau_\eta}) = \nu$ as required.
\end{proof}

Let $\chi_{a,b} = \frac{b-x}{b-a} \delta_a + \frac{x-a}{b-a} \delta_b$.
Then $\chi_{a,b}$ is the law of $X_{\tau^X_{a,b}}$. Moreover, $\sL(X_{\tau^X_{a, \infty}}) = \delta_a$. Then,
\[ Q^X(\sT_T) = \left( \cup_{0 \leq a \leq x} \delta_x \right) \cup \left( \cup_{0 \leq a < x <b <\infty} \chi_{a,b} \right). \]

\section{Sufficiency of mixed threshold rules}

Our main result is that in a large class of problems it is sufficient to search over the class of mixed threshold rules.
\begin{thm}
\label{thm:main1}
Suppose $Y$ is a regular, time-homogeneous diffusion. Suppose the law invariance property holds (Assumption~\ref{ass:lip}) and that the filtration is sufficiently rich (Assumption~\ref{ass:filtration}). Then $V_*(\sT) = V_*(\sT_R)$.
\end{thm}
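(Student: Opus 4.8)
The plan is to deduce the theorem directly from Lemma~\ref{lem:Q=} together with the law invariance property, so that essentially all of the genuine work has already been carried out in establishing the Skorokhod-type embedding. The guiding principle is that, once $V$ depends on a stopping rule only through the law of the stopped process, the value $V_*(\sS)$ is a functional of the \emph{set} of attainable laws $Q^X(\sS)$ alone; two classes of stopping times that generate the same attainable laws must therefore yield the same optimal value.

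Concretely, I would first record the easy inclusion. Since $\sT_R \subset \sT$ we have immediately $V_*(\sT_R) \le V_*(\sT)$, so the only substantive claim is the reverse inequality. For this, fix an arbitrary $\tau \in \sT$ and set $\nu = \sL(X_\tau) \in Q^X(\sT)$. By Lemma~\ref{lem:Q=} we have $Q^X(\sT) = Q^X(\sT_R)$, so there exists $\tilde\tau \in \sT_R$ with $\sL(X_{\tilde\tau}) = \nu$; Assumption~\ref{ass:filtration} is precisely what guarantees that the required mixing variable is available so that such a $\tilde\tau$ genuinely lies in $\sT_R$. Transporting back through the scale function gives $\sL(Y_{\tilde\tau}) = \nu \sharp s = \sL(Y_\tau)$, whence law invariance (Assumption~\ref{ass:lip}) yields $V(\tilde\tau) = H(\sL(Y_{\tilde\tau})) = H(\sL(Y_\tau)) = V(\tau)$. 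Taking the supremum over $\tau \in \sT$ gives $V_*(\sT) \le V_*(\sT_R)$, and combining the two inequalities completes the proof. An equivalent and slightly cleaner phrasing uses the representation $V_*(\sS) = \sup_{\nu \in Q^X(\sS)} H^X(\nu)$ from the previous section: by Lemma~\ref{lem:Q=} the two suprema defining $V_*(\sT)$ and $V_*(\sT_R)$ range over the identical set $Q^X(\sT) = Q^X(\sT_R)$ and are therefore equal, with no continuity or measurability hypothesis on $H^X$ needed.

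I do not expect any serious obstacle in the theorem itself; the difficulty has been front-loaded into Lemma~\ref{lem:Q=}, whose proof supplies the explicit mixed-threshold embedding. The only points requiring care are bookkeeping ones: ensuring the change of scale is applied consistently so that equality of $X$-laws transfers to equality of $Y$-laws via $\sharp s$, and noting that Lemma~\ref{lem:Q=} must be invoked in its full generality (all four cases for $I^X$), with the unbounded and degenerate cases handled by the appendix rather than by the single case treated in the main text. It is worth emphasising that no regularity or convexity of $H$ is used anywhere, which is exactly the feature distinguishing this result from the quasi-convex setting of Theorem~\ref{thm:main2}: there the restriction can be pushed all the way down to $\sT_T$, whereas here randomization is genuinely needed to realize the full range of attainable laws.
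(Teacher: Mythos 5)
Your proposal is correct and follows essentially the same route as the paper: the paper's proof is exactly your ``cleaner phrasing,'' namely that by Lemma~\ref{lem:Q=} the suprema $\sup_{\mu \in Q(\sT)} H(\mu)$ and $\sup_{\mu \in Q(\sT_R)} H(\mu)$ range over the same set of laws and hence coincide, with no regularity needed on $H$. Your first, unpacked version (fixing $\tau$, producing $\tilde\tau \in \sT_R$ with the same stopped law, and invoking law invariance) is just an elementwise restatement of that identity, so there is nothing genuinely different to compare.
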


\begin{proof}
Since $Q^X(\sT) = Q^X(\sT_R)$ (Lemma~\ref{lem:Q=}) we have $Q(\sT) = Q(\sT_R)$. Then
\[ V_*(\sT) = \sup_{\mu \in Q(\sT)} H(\mu) = \sup_{\mu \in Q(\sT_R)} H(\mu) = V_*(\sT_R). \]
\end{proof}

Note that it is not our claim that every optimal stopping rule is a mixed threshold rule. Typically, at least in the case where $V(\sT_T) < V(\sT)$, there will be other optimal stopping rules which are not of threshold type.
%But, the fact that in every realisation of an optimal stopping problem the agent stops either at a new maximum or a new minimum does not tell us that we must be in the expected utility paradigm (or even, see the next section, that $H$ must be quasi-convex).

\subsection{Examples}
\subsubsection{Rank dependent utility and optimal stopping}
Let $Z$ be a non-negative random variable. Let $v:[0,\infty) \mapsto [0,\infty)$ be an increasing, differentiable function with $v(0)=0$. Then the expected value of $v(Z)$ can be expressed as $\E[v(Z)] = \int_0^\infty v'(z) \bar{F}_Z(z) dz$. Under rank-dependent utility (Quiggin~\cite{Quiggin:82}) or probability weighting (Tversky and Kahneman~\cite{TverskyKahneman:96}) the prospect value $\sE_v(Z)$ of $Z$ is
\[ \sE_v(Z) = \int_0^\infty v'(z) w(\bar{F}_Z(z)) dz \]
where $w :[0,1] \mapsto [0,1]$ is an increasing, differentiable probability weighting function. Writing $G_Z=F_Z^{-1}$ for the quantile function of $Z$, then after a change of variable and integration by parts we have (see Xu and Zhou~\cite[Lemma 3.1]{XuZhou:13}) the alternative representation
\[ \sE_v(Z) = \int_0^1 w'(1-u) {G}_Z(u) du. \]

Now let $Y=(Y_t)_{t \geq 0}$ be a non-negative diffusion and consider the problem of maximizing over stopping times the prospect value of the stopped process $Y$, ie of finding
\begin{equation}
\label{eq:rdu}
 \sup_{\tau \in \sT} \sE_v(Y_\tau).
\end{equation}
Clearly the prospect value depends on the stopping time only through the law of the stopped process. Hence it is sufficient to characterize the optimal target distribution, for example via its quantile function. Xu and Zhou~\cite{XuZhou:13} solve for the optimal quantile function in several cases. One relevant case is the following:
\begin{prop}[Xu and Zhou~\cite{XuZhou:13}]
Suppose $Y$ is in natural scale and has state space $[0,\infty)$and initial value $y$. Suppose $v$ and $w$ are concave. Suppose there exists $\lambda^* \in (0,\infty)$ which solves
\[ \int_0^1 (v')^{-1} \left( \frac{\lambda^*}{w'(1-u)} \right) du = y .\]
Then the quantile function of the optimal stopping distribution is
$G^*(u) = (v')^{-1} \left( \frac{\lambda^*}{w'(1-u)} \right)$.
\label{prop:pt}
\end{prop}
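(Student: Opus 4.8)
The plan is to combine the characterization of the attainable laws from Lemma~\ref{lem:Q=} with a concave-optimization argument over quantile functions, verified by a tangent-line inequality.

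First I would reduce the problem to an optimization over quantile functions. Since $Y$ is already in natural scale we may take $X=Y$ and $x=y$, and by Lemma~\ref{lem:Q=} the set of attainable laws of the stopped process is exactly $Q^X(\sT)=\sP_{\leq y}=\{\nu\in\sP([0,\infty)):\int z\,\nu(dz)\leq y\}$. Passing to quantile functions, this is the set $\mathcal{G}$ of nondecreasing $G:(0,1)\to[0,\infty)$ with $\int_0^1 G(u)\,du\leq y$. In quantile form the objective is $\sE_v(Z)=\int_0^1 w'(1-u)\,v(G(u))\,du$, so \eqref{eq:rdu} becomes the problem of maximizing this concave functional of $G$ over the convex set $\mathcal{G}$.

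Next I would produce the candidate via a Lagrangian heuristic. Relaxing the monotonicity constraint and attaching a multiplier $\lambda$ to the mean constraint gives $\int_0^1[w'(1-u)v(G(u))-\lambda G(u)]\,du+\lambda y$; pointwise maximization of the map $g\mapsto w'(1-u)v(g)-\lambda g$ (concave in $g$, since $v$ is concave) yields the first-order condition $w'(1-u)v'(g)=\lambda$, i.e. $g=(v')^{-1}(\lambda/w'(1-u))$. The hypothesis provides the value $\lambda^*$ for which $\int_0^1(v')^{-1}(\lambda^*/w'(1-u))\,du=y$, so the candidate $G^*(u)=(v')^{-1}(\lambda^*/w'(1-u))$ exactly saturates the mean constraint. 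I would then make the argument rigorous by verification rather than by invoking duality: for any $G\in\mathcal{G}$, concavity of $v$ gives the tangent inequality $v(G(u))-v(G^*(u))\leq v'(G^*(u))(G(u)-G^*(u))$; multiplying by $w'(1-u)\geq 0$, integrating, and using $w'(1-u)v'(G^*(u))=\lambda^*$ turns the right-hand side into $\lambda^*\int_0^1(G(u)-G^*(u))\,du=\lambda^*(\int_0^1 G-y)\leq 0$, because $\int_0^1 G\leq y=\int_0^1 G^*$ and $\lambda^*>0$. Hence $\sE_v(G)\leq\sE_v(G^*)$ for every attainable $G$, so $G^*$ is optimal.

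The main thing to check---and the step I expect to be the real obstacle---is that $G^*$ genuinely belongs to $\mathcal{G}$, i.e. that the pointwise optimizer is an admissible quantile function; this is exactly where the two concavity hypotheses are used. Concavity of $w$ makes $w'$ nonincreasing, so $u\mapsto w'(1-u)$ is nondecreasing and $u\mapsto\lambda^*/w'(1-u)$ is nonincreasing; concavity of $v$ makes $(v')^{-1}$ nonincreasing; composing, $G^*$ is nondecreasing, hence a bona fide quantile function, and $\int_0^1 G^*=y$ makes it feasible. One should also record the mild caveat that the clean formula presumes $(v')^{-1}(\lambda^*/w'(1-u))\geq 0$ throughout (guaranteed by an Inada-type condition such as $v'(0+)=\infty$); otherwise the optimizer is truncated at $0$ on an initial interval and the same tangent-line verification still applies. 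Finally, by Theorem~\ref{thm:main1} the optimal law with quantile function $G^*$ is in fact attained by a randomized threshold rule.
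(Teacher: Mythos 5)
Your proposal is correct and follows essentially the same route as the paper's own proof: feasibility of $G^*$ (monotonicity from concavity of $v$ and $w$, constraint saturated at $y$) followed by a Lagrangian verification against any $G$ with $\int_0^1 G(u)\,du \leq y$. Your tangent-line inequality is just the explicit justification of the paper's step bounding $w'(1-u)v(G(u))-\lambda^* G(u)$ by $\sup_{g>0}[w'(1-u)v(g)-\lambda^* g]$, which is attained at $g=G^*(u)$; the added remarks on the Inada-type caveat and on attainment via Theorem~\ref{thm:main1} are sensible but not substantive departures.
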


Xu and Zhou~\cite{XuZhou:13} point out that although there is a unique optimal prospect there are infinitely many stopping rules which attain this prospect.
They advocate the use of the stopping rule based on the Az\'ema-Yor stopping time~\cite{AzemaYor:79}, in which case the stopping rule has a drawdown feature, and involves stopping the first time the process falls below some function of the maximum. Our main result says that there is also a randomized threshold rule which is optimal.

\subsubsection{Cautious stochastic choice}
Given a process $Y$ and a utility function $u$ the certainty equivalent associated with a stopping time $\tau$ is $\sC_u(\tau) = u^{-1}(\E[u(Y_\tau)])$. The idea in Cautious stochastic choice (Cerreia-Vioglio et al~\cite{CerreiaVioglio:15}) is that agents use multiple utility functions and evaluate an outcome in a robust manner as the least favorable of the individual certainty equivalents. If the set of utility functions is $\{ u_\alpha \}_{\alpha \in \sA}$, and if we write $\sC_\alpha$ as shorthand for $\sC_{u_\alpha}$ then the CSC value of a stopping rule is
\begin{equation}
CSC(\tau) = \inf_{\alpha \in \sA} \sC_\alpha(\tau) = \inf_{\alpha \in \sA} u_\alpha^{-1} (\E[u_\alpha(Y_\tau)]) ,
\label{eq:csc}
\end{equation}
and an optimal stopping rule is the one which maximizes the CSC value.

Clearly the CSC value of a stopping rule depends only on the law of $Y_\tau$. Moreover, suppose $\sA = \{\alpha, \beta \}$ and suppose $u_{\alpha}$ and $u_{\beta}$ are strictly increasing and continuous with strictly increasing and continuous inverses. Suppose further that there exist $\tau_1$ and $\tau_2$ and $\tilde{y}$ such that $u_\alpha^{-1}(\E[u_\alpha(Y_{\tau_1})]) > \tilde{y} > u_\beta^{-1}(\E[u_\beta(Y_{\tau_1})])$ and $u_\alpha^{-1}(\E[u_\alpha(Y_{\tau_2})]) < \tilde{y} < u_\beta^{-1}(\E[u_\beta(Y_{\tau_2})])$. %Then $CSC(\tau_i) < k$ for $i=1,2$.
Let $\tau^\theta$ be a mixture of $\tau_1$ and $\tau_2$, defined such that if $Z$ is a $\sF_0$-measurable random variable taking values in $\{1,2\}$ with $\Prob(Z=1)=\theta$ then $\tau^\theta = \tau_Z$.
Then for ${\gamma \in \sA}$, $\sC_{\gamma}(\tau^\theta) = u_\gamma^{-1} (\theta \E[u_\gamma(Y_{\tau_1})] + (1-\theta) \E[u_\gamma(Y_{\tau_2})])$ is a continuous function of $\theta$. Moreover, $\sC_{\alpha}(\tau^\theta)$ is strictly increasing in $\theta$ and $\sC_{\beta}(\tau^\theta)$ is strictly decreasing. By our assumptions it follows that the best choice $\theta^*$ of $\theta$ is such that $\sC_{\alpha}(\tau^{\theta^*}) = \sC_{\beta}(\tau^{\theta^*})$ and then $\theta^* \in (0,1)$ and
$CSC(\tau^{\theta^*}) > \max \{ CSC(\tau_1), CSC(\tau_2)\}$.

In particular, the value associated with a stopping rule is not quasi-convex. By the analysis of this section, in searching for an optimal stopping rule it is sufficient to restrict attention to randomized threshold rules, but we cannot expect in general that there is a pure threshold rule which is optimal.
For a deeper study of optimal stopping in the context of Cautious stochastic choice see Henderson et al~\cite{HendersonHobsonZeng:17}.

\section{Sufficient conditions for the optimality of pure threshold rules}
In this section we argue that if the value associated with a stopping rule is law invariant, and if $H$ is quasi-convex and lower semi-continuous then pure threshold rules are optimal.

Recall that $H$ is quasi-convex if $H(\lambda \mu_1 + (1-\lambda) \mu_2) \leq \max \{ H(\mu_1), H(\mu_2) \}$ for $\lambda \in(0,1)$. It follows by induction that if $\mu = \sum_{i=1}^N \lambda_i \mu_i$ where $\lambda_i \geq 0$,  $\sum_{i=1}^N \lambda_i=1$ and $\mu_i \in Q(\sT)$ then
\begin{equation}
\label{eq:qc}
 H(\mu) \leq \max_{1 \leq i \leq N} H(\mu_i) \leq \sup_{\tilde{\mu} \in Q(\sT)} H(\tilde{\mu}) .
\end{equation}
Recall also that if $H$ is lower semi-continuous and $\mu_n \Rightarrow \mu$ then $H(\mu) \leq \lim \inf H(\mu_n)$. In fact we do not require $H(\mu) \leq \lim \inf H(\mu_n)$, but rather the weaker condition $H(\mu) \leq \lim \sup H(\mu_n)$.

\begin{lem}
Suppose $\nu \in Q^X(\sT)$ consists of finitely many atoms. Then there exists $\eta \in \sP(\sD^X)$ such that $\eta$ consists of finitely many atoms and $\sL(X_{\tau^X_\eta})=\nu$.
\label{lem:atoms}
\end{lem}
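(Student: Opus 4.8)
The plan is to recycle the explicit mixing measure $\eta$ built in the proof of Lemma~\ref{lem:Q=} and simply verify that, when the target $\nu$ is finitely atomic, this $\eta$ is finitely atomic as well; the embedding identity $\sL(X_{\tau^X_\eta})=\nu$ is then inherited for free. Since $\nu \in Q^X(\sT)$ and $Q^X(\sT) = \sP_{\leq x}$ (Lemma~\ref{lem:Q=}) in the case $I^X$ bounded below, we know $\nu$ is a probability measure on $[0,\infty)$ with mean at most $x$, and by hypothesis it is supported on finitely many points. The degenerate case $\nu = \delta_x$ corresponds to $\tau \equiv 0$ and is realized by the single-atom measure $\eta = \delta_{(x,\infty)}$, so we may assume $\nu \neq \delta_x$ and invoke the construction directly.

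Recall that the construction decomposes $\nu = \nu_0 + \nu_1$, where $\nu_0$ is the restriction of $\nu$ to its lower tail of total mass $v^*$ (supported in $[0,z^*]$) and $\nu_1$ carries the remaining mass $1-v^*$ (supported in $[z^*,\infty)$ with barycentre $x$), and then sets
\[ \eta(da,db) = \nu_0(da)\, I_{\{0 \leq a \leq z^*\}} I_{\{b=\infty\}} + \nu_1(da)\,\nu_1(db)\,\frac{b-a}{c}\, I_{\{z^* \leq a \leq x < b < \infty\}}. \]
First I would observe that $\nu_0$ and $\nu_1$ inherit finite atomicity from $\nu$: each arises by truncating the cumulative mass of $\nu$ at the level $v^*$, so both are supported on subsets of the atoms of $\nu$, with at most the single atom sitting at $z^* = G(v^*)$ split between the two. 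Hence $\nu_0$ and $\nu_1$ each have finitely many atoms. Next I would propagate this through the formula for $\eta$. The first summand is the pushforward of $\nu_0$ under $a \mapsto (a,\infty)$, hence finitely atomic. For the second summand, if $\nu_1 = \sum_j q_j \delta_{w_j}$ then $\nu_1 \otimes \nu_1 = \sum_{j,k} q_j q_k \delta_{(w_j,w_k)}$ is finitely atomic; multiplying by the kernel $(b-a)/c$ merely reweights these atoms, and the indicator $I_{\{z^* \leq a \leq x < b < \infty\}}$ retains only the finite subcollection of pairs $(w_j,w_k)$ with $w_j \leq x < w_k$. Therefore $\eta$ is a finite sum of atoms, which is the claim.

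There is essentially no analytic obstacle here: the content of the lemma is bookkeeping, namely checking that none of the operations assembling $\eta$ (truncation, the product $\nu_1 \otimes \nu_1$, reweighting by the continuous kernel $(b-a)/c$, and restriction to a region) can manufacture a continuous component out of atomic inputs. The one point that warrants a moment's care is the truncation defining $\nu_0$ and $\nu_1$, where an atom of $\nu$ straddling the mass level $v^*$ is divided into two atoms sharing the location $z^*$; this still leaves only finitely many atoms in total, so the conclusion is unaffected.
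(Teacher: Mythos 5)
Your proof is correct and follows exactly the route the paper takes: the paper's own proof of Lemma~\ref{lem:atoms} is the one-line observation that the construction of $\eta$ in Lemma~\ref{lem:Q=} preserves atomicity, and you have simply spelled out the bookkeeping (finite atomicity of $\nu_0$, $\nu_1$, of the pushforward $a \mapsto (a,\infty)$, and of the reweighted restricted product $\nu_1 \otimes \nu_1$) that the paper leaves implicit. Your extra care about the atom possibly split at mass level $v^*$ is a worthwhile detail, but there is no substantive difference in approach.
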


\begin{proof}
It follows from the construction in the proof of Lemma~\ref{lem:Q=} that if $\mu$ is purely atomic then so is $\eta$.
\end{proof}

\begin{lem}
Let $\nu$ be an element of $Q^X(\sT)$. Then there exist $(\eta_n)_{n \geq 1}$ such that $\eta_n$ has finite support for each $n$ and such that $\sL(X_{\tau^X_{\eta_n}}) \Rightarrow \nu$.
\label{lem:approx}
\end{lem}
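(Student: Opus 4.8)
The goal is to show that any $\nu \in Q^X(\sT)$ can be approximated weakly by laws of the form $\sL(X_{\tau^X_{\eta_n}})$ where each $\eta_n$ has finite support. By Lemma~\ref{lem:Q=} we know $Q^X(\sT) = \sP_{\leq x}$ in the case $I^X$ is bounded below, so $\nu$ is simply any probability measure on $[0,\infty)$ with $\int z \, \nu(dz) \leq x$. The plan is to first approximate $\nu$ by a sequence of \emph{finitely atomic} measures $\nu_n \Rightarrow \nu$, and then apply Lemmas~\ref{lem:Q=} and~\ref{lem:atoms} to realize each $\nu_n$ as $\sL(X_{\tau^X_{\eta_n}})$ for some finitely supported $\eta_n$.

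The main subtlety is that the embedding construction of Lemma~\ref{lem:Q=} only applies to target measures in $Q^X(\sT) = \sP_{\leq x}$, so the approximating atomic measures $\nu_n$ must themselves lie in $\sP_{\leq x}$, i.e. each must satisfy the barycentre constraint $\int z \, \nu_n(dz) \leq x$. A naive discretization need not respect this constraint, so care is needed. First I would choose an increasingly fine partition of $[0,\infty)$ and define $\nu_n$ by relocating the mass of $\nu$ on each partition cell to a representative point; to preserve the mean constraint, I would place the mass at a point no larger than the conditional barycentre of $\nu$ on that cell (for instance the left endpoint of the cell, or the infimum of the support within the cell). This guarantees $\int z \, \nu_n(dz) \leq \int z \, \nu(dz) \leq x$, so $\nu_n \in \sP_{\leq x} = Q^X(\sT)$, while the mesh going to zero forces $\nu_n \Rightarrow \nu$. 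One must also handle the possible atom of $\nu$ at (or mass escaping to) infinity arising from the convention $X_\tau = 0$ on $\{\tau = \infty\}$, but since $\nu$ is a genuine probability measure on $[0,\infty)$ its tail mass is small and can be truncated into the finite discretization with negligible effect on weak convergence.

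Once $\nu_n \in Q^X(\sT)$ is finitely atomic, Lemma~\ref{lem:atoms} supplies a finitely supported $\eta_n \in \sP(\sD^X)$ with $\sL(X_{\tau^X_{\eta_n}}) = \nu_n$. Chaining this with $\nu_n \Rightarrow \nu$ gives $\sL(X_{\tau^X_{\eta_n}}) \Rightarrow \nu$, which is exactly the claim. The hard part is verifying that the discretization can be carried out while simultaneously respecting the inequality $\int z \, \nu_n(dz) \leq x$ and achieving weak convergence; the downward rounding of atoms (placing mass at a point below the local barycentre) is the device that makes both hold at once, and it is worth checking that this rounding does not spoil convergence, which it does not because the rounding error within each cell is bounded by the mesh size.
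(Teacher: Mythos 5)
Your proof is correct, but it takes a genuinely different route from the paper's. You approximate the \emph{target law} $\nu$ by finitely atomic measures $\nu_n \Rightarrow \nu$ (rounding mass downward within each cell to preserve $\int z\,\nu_n(dz) \leq x$) and then re-run the embedding of Lemma~\ref{lem:Q=} via Lemma~\ref{lem:atoms} to get each $\eta_n$. The paper instead works entirely on the other side of the embedding: it takes the single randomizing measure $\eta$ with $\sL(X_{\tau^X_\eta}) = \nu$ (which exists since $Q^X(\sT)=Q^X(\sT_R)$), approximates \emph{it} by finitely supported $\eta_n \Rightarrow \eta$, and observes that the map $\eta \mapsto \sL(X_{\tau^X_\eta})$ is weakly continuous, because for bounded continuous $f$ the function $\tilde{f}(a,b) = f(a)\tfrac{b-x}{b-a} + f(b)\tfrac{x-a}{b-a}$ is bounded and continuous on the space of threshold pairs. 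The paper's route buys two things: first, there is no moment constraint to worry about at all, since \emph{every} probability measure on $\sD^X$ is an admissible randomization, so no rounding device is needed; second, the argument transfers verbatim to the other state-space cases treated in the appendix. Your route is more constructive but the constraint-preservation device is case-dependent: in the bounded-interval case the constraint is the equality $\int z\,\nu(dz) = x$, and downward rounding strictly decreases the mean, so there you would need to replace it with barycentre-preserving rounding (splitting each cell's mass between the two cell endpoints in the proportions that match the conditional mean). With that amendment your argument covers all cases, and as written it is a valid proof in the main-text setting where $I^X$ is bounded below.
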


\begin{proof}
Since $\nu \in Q^X(\sT)=Q^X(\sT_R)$ there exists $\eta$ such that $\sL(X_{\tau^X_\eta}) = \nu$. Let $(\eta_n)_{n \geq 1}$ be a sequence of measures with finite support such that $\eta_n \Rightarrow \eta$. Then for $f:[0,\infty) \mapsto \R$ a bounded continuous test function define
$\tilde{f}:[0,x] \times [x,\infty)$ by $\tilde{f}(a,b) = f(a) \frac{b-x}{b-a} + f(b)\frac{x-a}{b-a}$ for $a<b$ with $\tilde{f}(x,x)=f(x)$.
Then, since $\tilde{f}$ is bounded and continuous
\[
\E[f(X_{\tau^X_{\eta_n}})]  =  \int \int \eta_n(da, db) \tilde{f}(a,b)
 \rightarrow  \int \int \eta(da, db) \tilde{f}(a,b) = \E[f(X_{\tau^X_{\eta}})]
\]
and it follows that $\nu_n := \sL(X_{\tau^X_{\eta_n}}) \Rightarrow \nu$.
\end{proof}

\begin{thm}
\label{thm:main2}
Suppose $Y$ is a regular, time-homogeneous diffusion. Suppose the law invariance property holds (Assumption~\ref{ass:lip}).
Suppose that $H$ is quasi-convex and lower semi-continuous.
Then $V_*(\sT) = V_*(\sT_T)$.
\end{thm}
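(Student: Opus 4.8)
The plan is to prove the one nontrivial inequality $V_*(\sT) \le V_*(\sT_T)$, since the reverse is immediate from $\sT_T \subseteq \sT$. Fix an arbitrary $\tau \in \sT$ and set $\nu = \sL(X_\tau) \in Q^X(\sT)$ and $\mu = \sL(Y_\tau) = \nu \sharp s$, so that $V(\tau) = H(\mu)$ by law invariance (Assumption~\ref{ass:lip}). It then suffices to show $H(\mu) \le V_*(\sT_T)$ for every such $\tau$ and take the supremum over $\tau \in \sT$.

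The first step is to approximate $\nu$ by laws of finitely-randomized threshold rules. By Lemma~\ref{lem:approx} there are measures $\eta_n$ with finite support such that $\nu_n := \sL(X_{\tau^X_{\eta_n}}) \Rightarrow \nu$. The key observation is that, when $\eta_n = \sum_i \lambda_i^{(n)} \delta_{(a_i,b_i)}$ is finitely supported, the rule $\tau^X_{\eta_n}$ is a genuine finite mixture of pure threshold rules: conditioning on the $\sF_0$-measurable choice of interval and using the independence of $X$ from that choice (Assumption~\ref{ass:filtration}) --- exactly the identity $\E[f(X_{\tau^X_\eta})] = \int\!\int \eta(da,db)\,\E[f(X_{\tau^X_{a,b}})]$ established in the proof of Lemma~\ref{lem:Q=} --- gives $\nu_n = \sum_i \lambda_i^{(n)} \chi_{a_i,b_i}$, a finite convex combination of elements of $Q^X(\sT_T)$.

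I would then push forward by the (continuous, strictly increasing) scale map $s$. Linearity of the pushforward gives $\mu_n := \nu_n \sharp s = \sum_i \lambda_i^{(n)} (\chi_{a_i,b_i} \sharp s)$, a finite convex combination of elements of $Q(\sT_T)$, since each $\chi_{a_i,b_i} \sharp s$ is the law of $Y$ stopped under a pure threshold rule. Applying quasi-convexity in the form of the first inequality in \eqref{eq:qc} (with the atomic threshold laws as the $\mu_i$) and then using that each $\chi_{a_i,b_i}\sharp s \in Q(\sT_T)$ yields $H(\mu_n) \le \max_i H(\chi_{a_i,b_i} \sharp s) \le \sup_{\tilde\mu \in Q(\sT_T)} H(\tilde\mu) = V_*(\sT_T)$ for every $n$. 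Since $s$ is continuous, $\nu_n \Rightarrow \nu$ forces $\mu_n \Rightarrow \mu$, and lower semicontinuity (in the weaker form $H(\mu) \le \limsup_n H(\mu_n)$ noted before the statement) gives $H(\mu) \le \limsup_n H(\mu_n) \le V_*(\sT_T)$. Taking the supremum over $\tau \in \sT$ completes the argument.

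I expect the conceptual crux --- rather than any single hard computation --- to be the joint use of the two hypotheses on $H$. Quasi-convexity alone disposes of finite mixtures of threshold laws, but the attainable laws produced by the Hall-type embedding of Lemma~\ref{lem:Q=} may have continuous support and need not be exact finite mixtures; lower semicontinuity is precisely what transfers the uniform bound $H(\mu_n) \le V_*(\sT_T)$ on the finitely supported approximants to the limit law $\mu$. The only routine points to verify are that each approximating mixture really consists of threshold laws lying in $Q(\sT_T)$ and that weak convergence survives the pushforward by the continuous map $s$.
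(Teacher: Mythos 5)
Your proof is correct and follows essentially the same route as the paper: approximate $\sL(X_\tau)$ by laws of finitely supported randomized threshold rules via Lemma~\ref{lem:approx}, observe that these are finite convex combinations of pure threshold laws, bound $H$ on them using quasi-convexity as in \eqref{eq:qc}, and pass to the limit with the $\limsup$ form of lower semi-continuity. The only (cosmetic) difference is that you read off the convex decomposition of $\nu_n$ directly from the finite support of $\eta_n$ rather than routing through Lemma~\ref{lem:atoms} as the paper does; this is a harmless streamlining which also makes explicit the existence of the approximating sequence $\mu_n \Rightarrow \mu$, a point the paper's proof leaves implicit.
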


\begin{proof}
Clearly $V_*(\sT) \geq V_*(\sT_T)$.

For any $\mu_n$ with finite support we can define $\nu_n = \mu_n \sharp s^{-1}$. Then we can find a measure $\eta_n$ with finite support such that
$\sL(X_{\tau^X_{\eta_n}}) = \nu_n$. Moreover $\nu_n$ can be decomposed as a convex combination
\[ \nu_n = \sum_{i=1}^N \gamma_i \chi_{a_i,b_i} + \sum_{j=1}^M \lambda_j \delta_{a_j}. \]
Then, since $H$ is quasi-convex,
\begin{eqnarray*} H(\mu_n) & \leq & \left( \max_{1 \leq i \leq N} H(\chi_{a_i,b_i} \sharp s^{-1}) \right) \vee
 \left( \max_{1 \leq j \leq M} H(\delta_{s^{-1}(a_j)}) \right) \\
 & \leq & \left( \sup_{0 \leq a \leq x \leq b<\infty} H(\chi_{a,b} \sharp s^{-1}) \right) \vee
 \left( \sup_{0 \leq a \leq x} H(\delta_{s^{-1}(a)}) \right) = V_*(\sT_T).
\end{eqnarray*}
Then, for $\tau \in \sT$, if $\mu = \sL(Y_\tau)$ and if $\mu_n \Rightarrow \mu$
\[ V_\tau = H(\mu) \leq \limsup H(\mu_n) \leq V_*(\sT_T). \]
Hence $V_*(\sT) \leq V_*(\sT_T)$.
\end{proof}

\section{Discussion}
In classical optimal stopping problems involving maximizing expected utility the optimal strategy is a threshold rule and involves stopping the first time that the process leaves an interval. However, in more general settings the optimal strategy may be more sophisticated. In some settings, for example those involving regret (Loomes and Sugden~\cite{LoomesSugden:82}) the optimal stopping rule may depend on some functional of the path (for example the maximum price to date). But, as argued here, for a large class of problems the payoff depends only on the distribution of the stopped process, and then there are many optimal stopping rules, some of which take the form of randomized threshold rules.  In this article we have utilized (an extended version of) the Hall solution of the Skorokhod embedding problem (Hall~\cite{Hall:85}) to give our randomized threshold rule, but there are other solutions of the Skorokhod embedding which can also be viewed as mixed threshold rules, including the original solution of Skorokhod~\cite{Skorokhod:65} and the solution of Hirsch et al~\cite{HirschProfettaRoynetteYor:11}.

The idea that if the objective is expressed in terms of a function which is not quasi-convex then agents may want to use randomised strategies is well appreciated in static settings. In a dynamic setting He et al~\cite{HeHuOblojZhou:17} argue that in binomial-tree, probability-weighted model of a casino (Barberis~\cite{Barberis:14}) gamblers may prefer path-dependent strategies over strategies which are defined via a partition of the set of nodes into those at which the gambler stops and those at which he continues. (See also Ebert and Strack~\cite{EbertStrack:16} and Henderson et al~\cite{HendersonHobsonTse:17} for  discussion of a related optimal stopping problem with probability weighting based on a diffusion process.) He et al~\cite{HeHuOblojZhou:17} argue further that the path-dependent strategy can be replaced by a randomized strategy under which the decision about whether to stop at a node depends not on the path history but rather the realization of an independent uniform random variable. This preference for randomization mirrors our result, but takes a different form. In our perpetual problem the agent chooses a randomized pair of levels and then follows a threshold strategy based on these levels. In He et al~\cite{HeHuOblojZhou:17} a zero-one decision about whether to stop at a node is replaced by a probability of continuing, and the stopping rules which arise are not randomized threshold rules.

Many optimal stopping models in the economics literature predict that the agent will stop on first exit from an interval, which necessarily involves stopping either at the current maximum or the current minimum. If instead, observed behavior includes stopping at levels which are not equal to one of the running extrema of the process then this is evidence against the model. (Strack and Viefers~\cite{StrackViefers:17} present experimental evidence from a laboratory game that players do not follow threshold strategies - instead players visit the same price three times on average before stopping.) But, our results imply that the converse is not true. Even if agents only ever take a decision to sell at a time when the process is at a new maximum or new minimum, this does not necessarily mean that agents are following a pure threshold rule. They could have any target distribution, as for example in Proposition~\ref{prop:pt}, but be realizing this target distribution via a randomized threshold rule.

\appendix
\section{Extension to other state spaces for the process in natural scale}
\subsection{The range of $X$ is unbounded below but bounded above}

In this case we may assume that ${I}^X = (-\infty,0)$ or $(-\infty, 0]$.
The analysis goes through almost unchanged except that now $X$ is a convergent sub-martingale and $Q^X(\sT) = Q(\sT_R) = \sP_{\geq x}$
where $\sP_{\geq x} = \{ \nu \in \sP((-\infty,0]) : \int z \nu(dz) \geq x \}$.

\subsection{The range of $X$ is bounded}
Suppose $X$ is bounded. In this case $Q(\sT) = Q(\sT_R) = \sP_{=x}$ where $\sP_{=x} = \{ \nu \in \sP(\bar{I}^X) : \int z \nu(dz) = x \}$. To see this note that $X$ is a uniformly integrable martingale and not just a super-martingale. Therefore we must have $\E[X_\tau] = \lim \E[X_{\tau \wedge t}] = x$ and hence $Q(\sT) \subseteq \sP_{=x}$.
Conversely, by the same argument as in Lemma~\ref{lem:Q=}, but this time with $v^*=0$ and $\nu_1 \equiv \nu$, we deduce that for any $\nu \in \sP_{=x}$ there exists a randomization $\eta$ such that $\sL(X_{\tau^X_\eta}) = \nu$. It follows that $Q(\sT) = Q(\sT_R) = \sP_{=x}$.

The proofs of Lemma~\ref{lem:atoms}, Lemma~\ref{lem:approx} and Theorem~\ref{thm:main1} go through unchanged.
%except that the statement of Lemma~\ref{lem:atoms} becomes
%\begin{quote}
%Suppose $\nu \in \sP_{=x}$ consists of finitely many atoms. Then there exists $\eta \in \sP(\sD^X)$ such that $\eta$ consists of finitely many atoms and $\sL(X_{\tau^X_\eta})=\nu$.
%\end{quote}

\subsection{The range of $X$ is $\R$}
Now suppose $I^X$ is unbounded above and below. By the Rogozin trichotomy (Rogozin~\cite{Rogozin:66})
$-\infty = \lim \inf_t X_t < x < \lim \sup_t X_t = \infty$ and $\lim_{t \uparrow \infty} X_t$ does not exist. In this case we must restrict $\sT$ to the set of stopping times with $\Prob(\tau < \infty) = 1$. In the main text we set $\sT_T = \sT \cap \left( \cup_{\beta \leq y \leq \gamma, \beta, \gamma \in \bar{I}^Y} \{ \tau_{\beta,\gamma} \} \right)$ but we could equivalently write $\sT_T =  \cup_{(\beta,\gamma) \in \sD_0} \{ \tau_{\beta,\gamma} \}$, where
$\sD_0 = ([-\infty, y] \cap \bar{I}^Y) \times  ([y,\infty] \cap \bar{I}^Y) \setminus \{ s^{-1}(-\infty), s^{-1}(\infty) \}$. We have to exclude the threshold rule $\tau_{s^{-1}(-\infty), s^{-1}(\infty)}$ since $\tau_{s^{-1}(-\infty), s^{-1}(\infty)} = \infty$ almost surely and $Y_\infty$ is not defined. In terms of threshold rules $\tau^X_{a,b}$ for $X$ we allow $a = -\infty$ or $b = \infty$ but not both. Then $\sT_T = \{ \tau_{\beta,\gamma} : (\beta,\gamma) \in \sD^X_0) \}$ where $\sD^X_0 = \sD^X \setminus \{-\infty,\infty\} = [\infty,x] \times [x,\infty] \setminus \{-\infty,\infty \}$.

In the definition of randomized threshold rules we can write $\sT_R = \{ \tau_\zeta : \zeta \in \sP(\sD_0) \}$ where $\sD_0$ is as above and similarly
$\sT_R = \{ \tau^X_\eta : \eta \in \sP(\sD^X_0) \}$.

When $I^X=\R$ we claim that we have $Q^X(\sT) = Q^X(\sT_R) = \sP(\R)$. Since stopping times are finite almost surely we must have $Q^X(\sT) \subseteq \sP(\R)$ so it is sufficient to show that for any $\nu \in \sP(\R)$ we have $\nu \in Q^X(\sT_R)$. Given $\nu \in \sP(\R)$ let $A_\nu$ be a $\sF_0$-measurable random variable with law $\nu$ and set $\tau = \inf  \{u: X_u = A_\nu \}$. Then $\sL(X_\tau) = \sL(A_\nu) = \nu$.

The proofs of Lemma~\ref{lem:atoms}, Lemma~\ref{lem:approx} and Theorem~\ref{thm:main1} go through unchanged.
%except that the statement of Lemma~\ref{lem:atoms} becomes

\subsection{Other results}
\begin{proof}[Proof of Proposition~\ref{prop:pt}]%[Xu and Zhou~\cite{XuZhou:13}]
A proof is given in Xu and Zhou~\cite[Theorem 5.1]{XuZhou:13}, but since it is short, elegant and pertinent to our main results we include it here. From the characterization of $Q(\sT)$ we have that a quantile function must satisfy $\int_0^1 G(u) du \leq y$. By construction $G^*$ has this property, and since $v'$ and $w'$ are decreasing, $G^*$ is increasing. Hence $G^*$ has the properties required of a quantile function of a distribution which
can be obtained by stopping $Y$. On the other hand, for any non-negative function $G$ with $\int_0^1 G(u) du \leq y$,
\begin{eqnarray*}
\int_0^1 w'(1-u) v(G(u)) du & = & \int_0^1 [w'(1-u) v(G(u)) - \lambda^* G(u)] du + \lambda^* \int_0^1 G(u) du \\
  &\leq &  \int_0^1 \sup_{g>0} [w'(1-u) v(g) - \lambda^* g] du + \lambda^* y \\
  & = &  \int_0^1  [w'(1-u) v(G^*(u)) - \lambda^* G^*(u)] du + \lambda^* y = \int_0^1 w'(1-u) v(G^*(u)) du .
\end{eqnarray*}
\end{proof}

\end{document}